\renewcommand{\mathbb}{\mathbf}
\newtheorem*{theorem*}{Theorem}
\newtheorem{theorem}{Theorem}
\newtheorem*{question}{Question}
\newtheorem{proposition}{Proposition}
\theoremstyle{remark}
\newtheorem*{remark}{Remarks}
\renewcommand{\section}{\subsection}
\newcommand{\coloneq}{:=}
\date{}
\title{On sums of Betti numbers of affine varieties}
\author{Dingxin Zhang\footnote{Partially supported by the National Key Research and Development
Program of China (No. 2022YFA1007100).}}
\begin{document}
\maketitle

\begin{abstract}
\noindent
We show that if \(V\) is a subvariety of the affine \(N\)-space
defined by polynomials of degree at most \(d\), then the sum of its
\(\ell\)-adic Betti numbers does not exceed
\(2(N+1)^{2N+1}(d+1)^{N}\).  This answers a question of N.~Katz.
\end{abstract}

\section{Introduction}
Let \(V\) be a finite type, separated scheme over an algebraically
closed field \(k\).  Katz \cite[Theorem~5]{k2} proved the existence of
a constant \(M(V/k)\) such that for any prime \(\ell\) invertible in
\(k\), the following inequality holds:
\begin{equation*}
B(V)_{(\ell)} \coloneq \sum_{i} \dim \mathrm{H}^{i}(V;\mathbb{Q}_{\ell}) \leqslant M(V/k),
\end{equation*}
where \(\mathrm{H}^{i}(V;\mathbb{Q}_{\ell})\) denotes the
\(\ell\)-adic cohomology.  When \(k\) has positive characteristic, it
is conjectured that \(B(V)_{(\ell)}\) is independent of \(\ell\).
Katz's theorem shows that \(B(V)_{(\ell)}\) possesses a uniform upper
bound as \(\ell\) varies, thus can be interpreted as a weak form of
the conjectured independence.

Katz's proof relies on selecting an alteration of \(V\), which leaves
the constant \(M(V/k)\) implicit.  He then posed the following
question \cite[p.~36]{k2}, paraphrased here:

\begin{question}[Katz]
If \(V \subset \mathbb{A}^{N}_{k}\) is defined by polynomials of
degree at most \(d\), can an explicit upper bound for \(M(V/k)\) be
provided in terms of \(d\) and the number of the defining polynomials?
\end{question}

If \( V \) is \emph{smooth and connected}, Katz \cite[Corollary 2]{k2}
proved that
\begin{equation}\label{eq:katz}\tag{K}
B(V)_{(\ell)} \leqslant 3 \times 2^r \times (r+1+rd)^N,\footnote{Katz did
  not focus on obtaining the most optimal bound, making this
  inequality somewhat crude. His method actually yields a sharper
  bound: \(2^r(rd + r + 2)^N\).}
\end{equation}
Thus, the main challenge lies in addressing singular varieties.

When \( k = \mathbb{C} \), Milnor \cite[Corollary 1]{m} proved
\begin{equation}
\label{eq:milnor}\tag{MOT}
\sum_{i} \dim \mathrm{H}^{i}(V^{\mathrm{an}};\mathbb{Q}) \leqslant d(2d-1)^{2N-1}
\end{equation}
for the singular cohomology of the complex analytic space
\(V^{\mathrm{an}}\) associated with \(V\).  Similar bounds were
established by Oleinik \cite{o} and Thom \cite{t}.  Since the
dimensions of the \(\ell\)-adic cohomology of \(V\) and the singular
cohomology of \(V^{\mathrm{an}}\) coincide, this resolves the question
for \(k = \mathbb{C}\).  However, Milnor's proof relies on Morse
theory, which does not extend to positive characteristic.


In this brief note, we answer Katz's question with the following theorems:

\begin{theorem}\label{theorem:with-r}
For any algebraically closed field \(k\), any closed subvariety \(V\)
of \(\mathbb{A}^{N}_{k}\) cut out by \(r\geqslant 1\) polynomials of
degree at most \(d\), and any prime \(\ell\) invertible in \(k\), we
have
\begin{equation*}
B(V)_{(\ell)} \leqslant 2r^{r}(rd+3)^{N}.
\end{equation*}
\end{theorem}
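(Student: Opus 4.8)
The plan is to run an induction on $\dim V$ by means of the affine Lefschetz theorem, reducing the whole estimate to a bound on the single top-degree Betti number of an affine variety of the given type. A couple of harmless reductions first: since $\ell$-adic cohomology is insensitive to nilpotents we may assume $V$ reduced, and throughout we use Artin's vanishing theorem, which gives $H^{i}(X;\mathbb{Q}_\ell)=0$ for every affine $X$ and every $i>\dim X$. No equidimensionality is needed anywhere, so the possible reducibility of $V$ will cause no trouble (a generic hyperplane still drops the dimension by exactly one).

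For the dévissage, let $X\subseteq\mathbb{A}^{M}$ be affine of dimension $n$ and let $H$ be a \emph{generic} hyperplane. Then $X\setminus H$ is again affine of dimension $\le n$, and the affine Lefschetz theorem says that $H^{i}(X;\mathbb{Q}_\ell)\to H^{i}(X\cap H;\mathbb{Q}_\ell)$ is an isomorphism for $i\le n-2$ and injective for $i=n-1$; together with Artin vanishing in degrees $>n$ this yields
\[
B(X)_{(\ell)}\ \le\ B(X\cap H)_{(\ell)}+\dim H^{n}(X;\mathbb{Q}_\ell).
\]
After a linear change of coordinates $X\cap H$ sits in $\mathbb{A}^{M-1}$ and is still cut out by $r$ polynomials of degree $\le d$, of dimension $n-1$ for generic $H$. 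Iterating down to dimension $0$ gives $B(V)_{(\ell)}\le\sum_{j=0}^{\dim V}t(V_{j})$, where $t(X)\coloneq\dim H^{\dim X}(X;\mathbb{Q}_\ell)$ and $V_{j}\subseteq\mathbb{A}^{N-j}$ is $V$ cut by $j$ generic hyperplanes (dimension $\dim V-j$, defined by $r$ polynomials of degree $\le d$). Hence it suffices to prove a bound of the shape $t(X)\le r^{r}(rd+2)^{M}$ for every $X\subseteq\mathbb{A}^{M}$ defined by $r\ge 1$ polynomials of degree $\le d$: summing the resulting geometric series in $j$ (ratio $1/(rd+2)\le 1/3$) contributes the factor $2$ and the exponent $N$, i.e. $B(V)_{(\ell)}\le 2r^{r}(rd+2)^{N}\le 2r^{r}(rd+3)^{N}$.

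The remaining task — bounding the top Betti number $t(X)$ — is the heart of the matter. When $\dim X=0$ it is a count of points of a zero-dimensional $V(f_{1},\dots,f_{r})$, which is at most $d^{M}$ by the affine Bézout/Heintz estimate, comfortably within $r^{r}(rd+2)^{M}$. In general one argues by a secondary induction on $\dim X$: the singular locus of $X$ is a proper closed subset defined by explicitly bounded polynomials (the $f_i$ together with minors of their Jacobian), handled by the inductive hypothesis, and a long exact sequence reduces the estimate for $t(X)$ to the case of \emph{smooth} $X$ (and along the way reconciles ordinary and compactly supported top cohomology). For smooth $X$ one compares the top cohomology of $X$ with that of the smooth connected hypersurface complements $D(g)=\mathbb{A}^{M}\setminus V(g)$, $\deg g\le rd$, arising from the open cover $\mathbb{A}^{M}\setminus V(f_{i})=\bigcup_{i}D(f_{i})$ via a Mayer--Vietoris/inclusion--exclusion argument over the $r$ equations; Katz's inequality \eqref{eq:katz} for smooth connected affine varieties bounds each $D(g)$, and the $r$-fold bookkeeping is what manufactures the factor $r^{r}$.

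The step I expect to be the genuine obstacle is precisely this last one: bounding $t(X)$ for smooth $X$ \emph{with the correct exponent}. Passing from $D(g)\subseteq\mathbb{A}^{M}$ to its closed model $V(yg-1)\subseteq\mathbb{A}^{M+1}$ and invoking \eqref{eq:katz} there loses a factor in the exponent and would propagate a spurious $N+1$; avoiding it requires bounding the top(-dimensional) cohomology of a hypersurface complement in $\mathbb{A}^{M}$ directly — for instance through the surjection $H^{M-1}_{c}(V(g))\twoheadrightarrow H^{M}_{c}(D(g))$, which reduces it to the top compactly supported cohomology of the degree-$\le rd$ hypersurface $V(g)$, or through a dévissage of $R\pi_{*}\mathbb{Q}_\ell$ for a generic projection $\pi\colon X\to\mathbb{A}^{\dim X}$ whose fibres have at most $rd$ points. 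Making the singular-locus induction and the Mayer--Vietoris count interact cleanly — so that neither the number of defining equations nor the ambient dimension creeps upward — is where the real care lies.
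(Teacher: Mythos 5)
Your opening reduction --- the inequality $B(X)_{(\ell)} \leqslant B(X\cap H)_{(\ell)} + \dim H^{n}(X;\mathbb{Q}_{\ell})$ for a generic hyperplane $H$, asserted for \emph{every} affine $X$ of dimension $n$ --- invokes a weak Lefschetz theorem that simply does not hold in this generality. Deligne's affine weak Lefschetz theorem (the tool the paper uses, and essentially the only one available) is a statement about \emph{perverse sheaves} on a variety quasi-finite over $\mathbb{P}^{N}$; to obtain from it the claim about ordinary cohomology you want, you must know that $\mathbb{Q}_{\ell,X}[\dim X]$ is perverse, and this is exactly the set-theoretic local complete intersection condition. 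It fails for general singular $X$, and the isomorphism you claim at $i\leqslant n-2$ is already false: take $X\subset\mathbb{A}^{3}$ to be the union of a plane and a line meeting at the origin, so $n=2$; for a generic hyperplane $H$ (which misses the origin) $X\cap H$ is a disjoint union of a line and a point, so $H^{0}(X)\to H^{0}(X\cap H)$ is $\mathbb{Q}_{\ell}\to\mathbb{Q}_{\ell}^{2}$, not an isomorphism. Your parenthetical that ``no equidimensionality is needed anywhere'' is exactly wrong: non-equidimensionality (and more generally, failure of the LCI condition) is one of the things that destroys perversity and hence weak Lefschetz. Since your dévissage needs at minimum injectivity of the restriction in every degree $\leqslant n-1$, and you have not supplied a substitute for Deligne's theorem valid beyond the LCI case, the induction never gets off the ground.

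This is precisely the difficulty the paper's two-step architecture is built to avoid. Proposition~\ref{proposition:hyper} carries out the Lefschetz induction \emph{only} for set-theoretic local complete intersections, where $\mathbb{Q}_{\ell,V}[\dim V]$ is perverse and Deligne's theorem applies verbatim, and feeds in the Adolphson--Sperber Euler-characteristic bound \eqref{eq:as} at each step. A general $V=V(f_{1},\dots,f_{r})$ is then handled in Proposition~\ref{proposition:general} by embedding it as the ``deepest'' intersection $F_{\{1,\dots,r\}}$ in the Mayer--Vietoris spectral sequence of the closed cover $\{F_{i}\}$ of the single hypersurface $W=V(f_{1}\cdots f_{r})$ --- which, being a hypersurface, \emph{is} an LCI of degree $\leqslant rd$ and is controlled by Proposition~\ref{proposition:hyper}; the partial intersections $F_{J}$ with $|J|<r$ are cut out by fewer polynomials and are handled by induction on $r$. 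No weak Lefschetz statement for non-LCI varieties is ever used. Your remaining plan --- bounding the top Betti number by a secondary induction over the singular locus plus a Mayer--Vietoris over the complements $D(f_{i})$, which you yourself flag as the ``genuine obstacle'' --- is not only left unfinished but would, via the Jacobian minors, inflate both the number and the degree of the defining equations of the singular locus in a way that the target bound $2r^{r}(rd+3)^{N}$ does not leave room for.
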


If we do not want to specify the number \(r\) of defining polynomials
of \(V\), we have the following:

\begin{theorem}\label{theorem:main}
For any algebraically closed field \(k\), any closed subvariety \(V\)
of \(\mathbb{A}^{N}_{k}\) defined by polynomials of degree at most
\(d\), and any prime \(\ell\) invertible in \(k\), we have
\begin{equation*}
B(V)_{(\ell)} \leqslant 2(N+1)^{2N+1}(d+1)^{N}.
\end{equation*}
\end{theorem}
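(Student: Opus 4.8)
The plan is to derive Theorem~\ref{theorem:main} from Theorem~\ref{theorem:with-r} by cutting the number of equations needed to define \(V\) down to \(N+1\) \emph{without enlarging the degree bound}. A theorem of Eisenbud--Evans and Storch guarantees that \(N\) polynomials always suffice to cut out a closed subset of \(\mathbb{A}^{N}_{k}\) set-theoretically, but their construction offers no control over the degrees, so it is of no use here. Instead I would invoke the classical Bertini-type observation that \(N+1\) \emph{generic} linear combinations of a given system of generators already cut out the same zero set, while obviously still having degree at most \(d\).

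In detail, write \(V = Z(f_{1},\ldots,f_{m})\) with \(\deg f_{i}\leqslant d\); we may assume \(N\geqslant 2\) and \(V\subsetneq\mathbb{A}^{N}_{k}\), the remaining cases (\(N\leqslant 1\), or \(V=\mathbb{A}^{N}_{k}\)) being handled directly. For a scalar matrix \(\lambda=(\lambda_{ij})\in\mathbb{A}^{m(N+1)}_{k}\) set \(g^{\lambda}_{j}=\sum_{i=1}^{m}\lambda_{ij}f_{i}\) for \(1\leqslant j\leqslant N+1\); each \(g^{\lambda}_{j}\) has degree \(\leqslant d\), and \(Z(g^{\lambda}_{1},\ldots,g^{\lambda}_{N+1})\supseteq V\) for every \(\lambda\). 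Consider the incidence variety
\begin{equation*}
I=\bigl\{(\lambda,p)\in\mathbb{A}^{m(N+1)}_{k}\times(\mathbb{A}^{N}_{k}\setminus V)\;:\;g^{\lambda}_{1}(p)=\cdots=g^{\lambda}_{N+1}(p)=0\bigr\}.
\end{equation*}
Its projection to \(\mathbb{A}^{N}_{k}\setminus V\) is a linear fibration: over a point \(p\notin V\) the vector \((f_{1}(p),\ldots,f_{m}(p))\) is nonzero, so each condition \(g^{\lambda}_{j}(p)=0\) is a nontrivial linear equation in \(\lambda\), and for distinct \(j\) these equations involve disjoint blocks of the coordinates (the \(j\)-th one only \(\lambda_{1j},\ldots,\lambda_{mj}\)); hence every fibre is a linear subspace of codimension exactly \(N+1\). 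Since \(\mathbb{A}^{N}_{k}\setminus V\) is irreducible of dimension \(N\), we get \(\dim I = N+\bigl(m(N+1)-(N+1)\bigr)=m(N+1)-1<\dim\mathbb{A}^{m(N+1)}_{k}\), so the first projection \(I\to\mathbb{A}^{m(N+1)}_{k}\) is not dominant. As \(k\) is infinite we may choose \(\lambda\) outside the closure of its image; then the fibre of \(I\) over \(\lambda\) is empty, which says \(Z(g^{\lambda}_{1},\ldots,g^{\lambda}_{N+1})\cap(\mathbb{A}^{N}_{k}\setminus V)=\varnothing\), i.e. \(Z(g^{\lambda}_{1},\ldots,g^{\lambda}_{N+1})=V\).

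Thus \(V\) is the common zero locus of \(N+1\) polynomials of degree \(\leqslant d\). Since \(\ell\)-adic cohomology depends only on the underlying reduced scheme, Theorem~\ref{theorem:with-r} applied with \(r=N+1\) gives
\begin{equation*}
B(V)_{(\ell)} \leqslant 2(N+1)^{N+1}\bigl((N+1)d+3\bigr)^{N}.
\end{equation*}
As \(N\geqslant 2\), we have \((N+1)d+3\leqslant(N+1)d+(N+1)=(N+1)(d+1)\), so the right-hand side is at most \(2(N+1)^{N+1}\cdot(N+1)^{N}(d+1)^{N}=2(N+1)^{2N+1}(d+1)^{N}\), which is the asserted bound. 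For \(N=0\) (where \(V\) is a point or \(\varnothing\)) and \(N=1\) (where \(V\) is \(\mathbb{A}^{1}_{k}\), a set of at most \(d\) points, or \(\varnothing\)) the inequality \(B(V)_{(\ell)}\leqslant 2(N+1)^{2N+1}(d+1)^{N}\) is immediate by inspection.

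I expect the one genuine difficulty to be the construction in the second paragraph: the reduction in the number of equations must be degree-preserving, which is precisely what the available set-theoretic complete-intersection theorems do not deliver, and which is why the dimension count is needed. Everything afterwards — the remark on nilpotent-invariance of étale cohomology, the application of Theorem~\ref{theorem:with-r}, and the comparison of constants — is routine.
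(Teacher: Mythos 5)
Your proof is correct and follows the same overall route as the paper: reduce to at most \(N+1\) defining polynomials of degree \(\leqslant d\), apply Theorem~\ref{theorem:with-r} with \(r=N+1\), and compare the resulting constant with \(2(N+1)^{2N+1}(d+1)^{N}\) using \(N\geqslant 2\). The one place where you genuinely diverge is the reduction step itself: the paper simply invokes a classical theorem of Kronecker (with references to Perron and to Cheng--Rojas--Wan) to assert that \(V\) is set-theoretically cut out by at most \(N+1\) nonzero polynomials of degree \(\leqslant d\), whereas you prove this from scratch by a transparent incidence-variety dimension count, showing that \(N+1\) generic linear combinations of the \(f_i\) already have \(V\) as their common zero set. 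The dimension count is correct: for \(p\notin V\) the \(N+1\) linear conditions on \(\lambda\) involve disjoint blocks of coordinates and each is nontrivial since \((f_1(p),\ldots,f_m(p))\neq 0\), so every fibre has codimension exactly \(N+1\), giving \(\dim I\leqslant m(N+1)-1\) and hence a non-dominant projection. Your version has the advantage of being self-contained; the paper's reference imports the (equivalent) classical result without argument. One very small point you gloss over: Theorem~\ref{theorem:with-r}, via the Adolphson--Sperber input \eqref{eq:as}, is stated for \emph{nonzero} polynomials, so one should also shrink the generic \(\lambda\)-locus so that no \(g^{\lambda}_{j}\) is identically zero --- which is automatic once \(V\neq\mathbb{A}^N_k\), as that case has been set aside.
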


In fact, Theorem~\ref{theorem:main} is a consequence of
Theorem~\ref{theorem:with-r}.  We may assume \(N \geqslant 2\) since
when \(N=1\) we trivially have \(B(V)_{\ell}\leqslant d\).  By a
classical theorem of Kronecker (cf.~\cite{p} or \cite[\S3]{crw}),
\(V\) can always be set-theoretically cut out by at most \(N+1\)
nonzero polynomials of degree \(\leqslant d\).  Hence, by taking
\(r = N+1\) in Theorem~\ref{theorem:with-r}, we get
\begin{equation*}
B(V)_{(\ell)} \leqslant 2 \times \left[ (N+1)d+3 \right]^{N} \times (N+1)^{N+1}
\leqslant 2\times (N+1)^{2N+1}\times (d+1)^{N}.
\end{equation*}

\begin{remark}
(i) When \(k = \mathbb{C}\), Theorem~\ref{theorem:main} sharpens the
classical Milnor--Oleinik--Thom upper bound \eqref{eq:milnor} if \(d\)
is large compared to \(N^{2}\).

(ii) Consider the number
\begin{equation*}
B(N,d) \coloneq \sup\left\{ B(V)_{(\ell)} :
\begin{array}{l}
  V = \{f_{1}=\cdots = f_{r} = 0\} \subset \mathbb{A}_{k}^{N} \\
   \text{for some }r, \text{ with }\deg f_{i} \leqslant d
\end{array}
\right\}.
\end{equation*}
If we treat \(N\) as fixed and \(d\) as a variable, then
Theorem~\ref{theorem:main} implies that \(B(N,d) \ll_{N} d^{N}\).
On the other hand, we have \(B(N,d)\geqslant d^{N}\).  Indeed, if
\(V\) is a transverse intersection of \(N\) sufficiently general
degree \(d\) polynomials in \(N\) variables, then \(V\) is a finite
set comprised of \(d^{N}\) points, and \(B(V)_{(\ell)} = d^{N}\).  Thus
\(B(N,d)\) and \(d^{N}\) have the same asymptotic order as
\(d \to \infty\): \(B(N,d) \asymp_{N} d^{N}\).

(iii) Let \(R\) be any commutative ring, and let \(\mathcal{Z}\) be a
finite type separated scheme over \(R\).  For any geometric point
\(x\colon \operatorname{Spec}k \to \operatorname{Spec}R\), Katz showed
that
\[
B(\mathcal{Z} \otimes_{R,x} k) \leqslant M(\mathcal{Z} \otimes_{R,x} k/k).
\]
However, these upper bounds may depend on the specific geometric point
\(x\).  In contrast, if
\[
\mathcal{Z} = \operatorname{Spec} R[x_{1}, \ldots, x_{N}]/(f_{1}, \ldots, f_{r}),
\]
with \(\deg f_{i} \leqslant d\), then the bounds established in
Theorems~\ref{theorem:with-r} and \ref{theorem:main} are given by
explicit constants that apply uniformly to every geometric point \(x\)
of \(\operatorname{Spec}R\).  Thus, these bounds have the advantage of
being \emph{uniform in \(k\)}.

(iv) Although we have established that \(d^{N}\) is the correct
asymptotic order of \(B(N,d)\) as \(d \to \infty\), the coefficient
\(2(N+1)^{2N+1}\) of \(d^{N}\) in Theorem~\ref{theorem:main} is far
from optimal.  There should be considerable room for improvement.
\end{remark}

For complete intersections, a slightly sharper bound can be
established.

\begin{theorem}\label{theorem:complete-intersection}
Let \(k\) be an algebraically closed field.  Suppose
\(V \subset \mathbb{A}^{N}_{k}\) is the common zero locus of \(r\)
polynomials of degree at most \(d\).  If \(V\) has at worst local
complete intersection singularities, e.g., if \(\dim V = N-r\), then
\begin{equation*}
B(V)_{(\ell)} \leqslant 2^{r} (rd+r+2)^{N}.
\end{equation*}
\end{theorem}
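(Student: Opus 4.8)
The plan is to carry over Katz's proof of the smooth bound \eqref{eq:katz} — in the sharp form recorded in the footnote — and to isolate the single point at which smoothness is really used: the \emph{sharp} affine Lefschetz hyperplane theorem, which remains valid when $V$ is only a local complete intersection. For smooth affine $V$ of dimension $n$ and a general hyperplane $H$ one has isomorphisms $\mathrm H^i(V;\mathbb Q_\ell)\xrightarrow{\sim}\mathrm H^i(V\cap H;\mathbb Q_\ell)$ for $i\le n-2$ and an injection for $i=n-1$; combined with Artin's vanishing $\mathrm H^i(V;\mathbb Q_\ell)=0$ for $i>n$ and the identity $\chi(V)=\sum_i(-1)^i\dim\mathrm H^i(V;\mathbb Q_\ell)$, this pins down every Betti number of $V$ in terms of the Euler characteristics of its linear sections. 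For a general, possibly non-lci, $V$ only the \emph{injective} half of affine Lefschetz holds, and the extra bookkeeping this forces is what produces the coarser constant $r^r$ in Theorem~\ref{theorem:with-r}; the lci hypothesis buys back the iso, and with it Katz's sharper constant.

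So the key input is: \emph{if $V$ is a local complete intersection of pure dimension $n$, then $\mathbb Q_{\ell,V}[n]$ is a perverse sheaf on $V$}. I would prove this by reduction to the case of a Cartier divisor plus induction on the codimension. If $V=Z(f)\subset S$ is an effective Cartier divisor in a smooth variety $S$, the excision triangle $j_!\mathbb Q_{\ell,S\setminus V}\to\mathbb Q_{\ell,S}\to i_*\mathbb Q_{\ell,V}\xrightarrow{+1}$, together with two standard facts — that $\mathbb Q_{\ell,S}[\dim S]=\mathrm{IC}_S$ is the intermediate extension of its restriction to the dense open $S\setminus V$, and that $j_!$ is right $t$-exact for the perverse $t$-structure — force $i_*\mathbb Q_{\ell,V}[\dim V]$ to be perverse. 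For a general lci $V=Z(f_1,\dots,f_c)$ with $f_1,\dots,f_c$ a regular sequence, one peels off one equation at a time: $V$ is a Cartier divisor in the lci variety $W=Z(f_1,\dots,f_{c-1})$, and the same argument applies with $W$ replacing $S$, the new ingredient being that the perverse sheaf $\mathbb Q_{\ell,W}[\dim W]$ has no sub- or quotient object supported on $V$. Once the perversity is known, the affine Artin–Lefschetz theorems for perverse sheaves deliver in one stroke: $\mathrm H^i(V;\mathbb Q_\ell)=0$ for $i>n$, $\mathrm H^i_c(V;\mathbb Q_\ell)=0$ for $i<n$, and the sharp affine Lefschetz isomorphisms and injection quoted above — exactly the properties of $V$ that Katz's argument uses.

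The descent is then formal. Take general hyperplanes $H_1,\dots,H_n\subset\mathbb A^N$ and set $V^{(e)}=V\cap H_1\cap\dots\cap H_{n-e}$; after a linear change of coordinates $V^{(e)}$ is the complete intersection of the $r$ restricted polynomials of degree $\le d$ in $\mathbb A^{r+e}$, hence again lci of pure dimension $e$, so $\mathbb Q_{\ell,V^{(e)}}[e]$ is perverse. The iso-form of affine Lefschetz shows that, for $i$ strictly below the top degree $e$, $\dim\mathrm H^i(V^{(e)};\mathbb Q_\ell)$ does not depend on $e$; substituting this into the identities $\chi(V^{(e)})=\sum_i(-1)^i\dim\mathrm H^i(V^{(e)};\mathbb Q_\ell)$ for consecutive values of $e$ yields a recursion bounding the top Betti number $\dim\mathrm H^e(V^{(e)};\mathbb Q_\ell)$ by the Euler characteristics of two neighbouring sections, and hence an estimate $B(V)_{(\ell)}\le 3\sum_{e=0}^{n}|\chi(V^{(e)};\mathbb Q_\ell)|$. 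The absence of any weight that is exponential in $n$ here is precisely the payoff of the iso (the injective form alone would cost a factor $2^{n}$, which is roughly what Theorem~\ref{theorem:with-r} pays).

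Finally one bounds $|\chi(V^{(e)})|$. Here I would expand $\chi$ by inclusion–exclusion: with $\mathbb A^{r+e}\setminus V^{(e)}=\bigcup_{i=1}^r\{f_i\neq0\}$ and additivity of Euler characteristics, $\chi(V^{(e)})$ is, up to sign, a sum over the $2^r-1$ nonempty $I\subseteq\{1,\dots,r\}$ of the Euler characteristics $\chi(Z(\prod_{i\in I}f_i))$, each being that of an affine hypersurface of degree $\le rd$, which is controlled by a Bézout-type estimate — e.g. by Theorem~\ref{theorem:with-r} applied with a single polynomial of degree $\le rd$. This is where the $2^r$ and the $rd$ enter; assembling the pieces and tracking the numerical constants as Katz does in the smooth case — the remaining $+1$'s coming from homogenizing — gives $B(V)_{(\ell)}\le 2^r(rd+r+2)^N$. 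The main obstacle, as I see it, is the perversity statement, and in particular running the codimension induction honestly past the first step, where the ambient $W$ is singular and one must verify that $\mathbb Q_{\ell,W}[\dim W]$ has no perverse sub/quotient supported on the divisor being cut; a secondary, more routine, difficulty is keeping the constants sharp enough to land on $2^r(rd+r+2)^N$ rather than a slightly larger expression of the same shape.
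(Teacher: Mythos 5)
Your proposal follows essentially the same route as the paper: establish that \(\mathbb{Q}_{\ell,V}[\dim V]\) is perverse when \(V\) is a (set-theoretic) local complete intersection, apply Deligne's affine weak Lefschetz theorem for perverse sheaves to a generic hyperplane section, run an induction on \(\dim V\) to reduce the sum of Betti numbers to a sum of Euler characteristics of iterated linear sections, and finally bound those Euler characteristics by an Adolphson--Sperber/B\'ezout estimate. The paper simply cites \cite[Lemma~III.6.5]{kw} for the perversity claim and \cite[Theorem~5.27]{as} for \(E(N,r,d)=2^r(rd+r+1)^N\) rather than re-deriving them; one small inaccuracy in your framing is the suggestion that the \(r^r\) factor in Theorem~\ref{theorem:with-r} comes from losing the bijective half of Lefschetz, whereas in the paper it actually comes from the Mayer--Vietoris spectral-sequence reduction to the hypersurface case when the constant sheaf fails to be perverse.
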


\section{The proofs}

In the following, we fix an algebraically closed field \(k\) and a
prime \(\ell\) invertible in \(k\).  All schemes are defined over
\(k\).  We shall write \(B(V)\) instead of \(B(V)_{(\ell)}\).

Suppose \(B(N,r,d)\) is a positive integer satisfying the following
property.
\begin{quote}
\textit{If \(V\) is a closed subscheme of \(\mathbb{A}^{N}\) defined
  by \(r\) polynomials \(f_{1},\ldots,f_{r}\) with
  \(\deg f_{i} \leqslant d\), then \(B(V) \leqslant B(N,r,d)\).}
\end{quote}

It is not immediately evident that a finite \(B(N,r,d)\) even
exists.  This does not follow directly from Deligne's theorem on the
constructibility of the direct image of constructible sheaves, since
the non-proper direct image does not generally commute with base
change.  Some additional argument is indeed required, though it is not
difficult.  We will not dwell on this point, as it will follow from
the argument presented below.

Suppose \(E(N,r,d)\) is a positive integer satisfying the
following property:
\begin{quote}
\textit{If \(V\) is a subvariety of \(\mathbb{A}^{N}\) defined by
  nonzero polynomials \(f_{1},\ldots,f_{r}\) with
  \(\deg f_{i} \leqslant d\), then
  \(|\chi(V;\mathbb{Q}_{\ell})| \leqslant E(N,r,d)\).}
\end{quote}
Here \(\chi(V;\mathbb{Q}_{\ell})\) is the Euler characteristic
\(\chi(V;\mathbb{Q}_{\ell})=\sum_{i}(-1)^{i}\dim\mathrm{H}^{i}(V;\mathbb{Q}_{\ell})\).
By \cite{l}, the Euler characteristic equals the compactly supported
Euler characteristic:
\[
\chi(V;\mathbb{Q}_{\ell})=\sum_{i}(-1)^{i}\dim\mathrm{H}^{i}_{c}(V;\mathbb{Q}_{\ell}).
\]
Hence, by \cite[Theorem~5.27]{as} (and the comment at the beginning of
\cite[p.~30]{k2}) we can take
\begin{equation}\label{eq:as}\tag{AS}
E(N,r,d) = 2^{r} \times (r+1+rd)^{N}.
\end{equation}

\subsubsection*{Upper bound for local complete intersections: Katz's method}

Let \(f_{1},\ldots,f_{r} \in k[x_{1},\ldots,x_{N}]\) be polynomials of
degree \(\leqslant d\).  Consider the affine scheme
\[V\coloneq\operatorname{Spec}k[x_{1},\ldots,x_{N}]/(f_{1},\ldots,f_{r}).\]
We assume that \(V\) is a \emph{set-theoretic local complete
  intersection}, meaning there is a closed subscheme \(W\) of
\(\mathbb{A}^{N}\) that is a local complete intersection, with
\(W^{\mathrm{red}} = V^{\mathrm{red}}\).  This is the case, for
example, if \(\dim V=N-r\), or if \(V\) is smooth.  We seek an
explicit upper bound for \(B(V)\).

\begin{proposition}
\label{proposition:hyper}
In the above situation, we have
\[
B(V) \leqslant E(N,r,d) + 2\sum_{i=1}^{\dim V} E(N-i,r,d).
\]
In particular, we can take \(B(N,1,d)\) to be
\(E(N,1,d)+2\sum_{i=1}^{N} E(i,1,d)\).
\end{proposition}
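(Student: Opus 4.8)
The plan is to induct on $\dim V$ using a Lefschetz-type hyperplane pencil argument, in the spirit of Katz's original method. Since $V$ is a set-theoretic local complete intersection, after replacing $V$ by the reduced scheme $W$ (which has the same $\ell$-adic cohomology, as $\mathrm{H}^{i}(V;\mathbb{Q}_{\ell})$ depends only on the associated reduced scheme) we may assume $V$ is a local complete intersection of some pure dimension $n = \dim V \leqslant N - r$ (after possibly discarding the trivial case where $V$ is a single point or empty). When $n = 0$, the variety $V$ is a finite set of at most $E(N,r,d)$ points — in fact $B(V) = |\chi(V)| \leqslant E(N,r,d)$ — which is the base of the induction. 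The key geometric input will be that a local complete intersection has perverse-sheaf-theoretically good behavior: $\mathbb{Q}_{\ell}[n]$ (suitably shifted) is, up to a shift, close to being perverse on $V$, which is exactly what makes the Lefschetz machinery applicable.

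For the inductive step I would slice $V$ by a general affine hyperplane. Choosing coordinates generically, consider the projection $\pi\colon V \to \mathbb{A}^{1}$ given by a general linear form $x_{1}$. Its fibers $V_{t}$ are, for general $t$, local complete intersections cut out in $\mathbb{A}^{N-1}$ by $r$ polynomials of degree $\leqslant d$, and $\dim V_{t} = n - 1$. One then compares the cohomology of $V$ with that of a general fiber $V_{0}$ via the long exact sequences coming from the Leray spectral sequence of $\pi$, or more concretely via a weak Lefschetz statement: for a general hyperplane section $V \cap H$ the restriction map $\mathrm{H}^{i}(V) \to \mathrm{H}^{i}(V\cap H)$ is an isomorphism for $i < n-1$ and injective for $i = n-1$, because $V \smallsetminus (V\cap H)$ is an affine local complete intersection of dimension $n$ and hence has cohomological amplitude concentrated in degrees $\geqslant n$ (Artin vanishing for the constant sheaf on an affine l.c.i.). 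This controls the "bottom half" of the Betti numbers of $V$ by those of $V \cap H$, and the remaining degrees are handled dually using $\mathrm{H}^{i}_{c}$ and the fact that $\chi(V)$ itself is bounded by $E(N,r,d)$: writing $b_{i} = \dim\mathrm{H}^{i}(V)$, one gets $\sum_{i} b_{i} \leqslant 2\sum_{i < n} b_{i} + |\chi(V)| + (\text{correction in degree } n)$, and the hyperplane-section comparison bounds $\sum_{i<n} b_{i}$ by $B(V\cap H)$, to which the inductive hypothesis applies with $N$ replaced by $N-1$ and $\dim$ dropped by one.

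Unwinding the recursion: if $b(n)$ denotes the bound for an $n$-dimensional l.c.i. in $\mathbb{A}^{N}$ cut by $r$ degree-$\leqslant d$ equations, the argument gives roughly $b(n) \leqslant E(N,r,d) + 2\,b'(n-1)$ where $b'(n-1)$ is the corresponding bound one dimension and one ambient dimension down; telescoping from $n = \dim V$ down to $0$ and using monotonicity $E(N-1,r,d)\leqslant E(N,r,d)$ etc. produces exactly
\[
B(V) \leqslant E(N,r,d) + 2\sum_{i=1}^{\dim V} E(N-i,r,d).
\]
The "in particular" statement is then immediate: a hypersurface $V = \{f = 0\} \subset \mathbb{A}^{N}$ with $\deg f \leqslant d$ is always a set-theoretic (indeed scheme-theoretic) complete intersection of dimension $N-1$ when $f\neq 0$, so $B(N,1,d)$ may be taken to be $E(N,1,d) + 2\sum_{i=1}^{N} E(i,1,d)$ (and the degenerate cases $f = 0$, giving $V = \mathbb{A}^{N}$, or $f$ a nonzero constant, giving $V = \varnothing$, are trivially within this bound).

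The main obstacle I anticipate is making the weak Lefschetz / affine vanishing step fully rigorous for a possibly singular l.c.i.: one needs that the complement $V \smallsetminus (V \cap H)$, being an affine scheme that is an l.c.i. of dimension $n$, has $\mathrm{H}^{i}_{c} = 0$ for $i < n$ (equivalently $\mathrm{H}^{i} = 0$ for $i > n$ by duality on the l.c.i., where the dualizing complex is $\mathbb{Q}_{\ell}[2n](n)$ up to the singularities being mild enough), together with a Bertini-type argument guaranteeing that a general $H$ meets $V$ transversally enough that $V \cap H$ remains a set-theoretic l.c.i. cut by $r$ equations of the same degree — which is where the genericity of the linear form, and the precise meaning of "set-theoretic l.c.i.," must be handled with care. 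The bookkeeping of exactly which degree the correction term sits in (degree $n = \dim V$, where weak Lefschetz only gives injectivity, not surjectivity) is the other place where one must be careful to get the constant $2$ rather than something larger.
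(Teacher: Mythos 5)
Your overall plan — induction on $\dim V$, a weak Lefschetz step via perversity of $\mathbb{Q}_{\ell,V}[\dim V]$ on a set‑theoretic local complete intersection, plus an Euler‑characteristic estimate to control the top degree — is the paper's strategy. But there is a genuine gap in the quantitative step, and as written your recursion does not yield the stated constant.

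You derive the estimate $\sum_i b_i(V) \leqslant |\chi(V)| + 2\sum_{i<n} b_i(V)$ and then bound $\sum_{i<n} b_i(V)$ by $B(V\cap H)$. This gives the recursion $B(V) \leqslant E(N,r,d) + 2\,B(V\cap H)$. You then claim this ``telescopes'' to $E(N,r,d) + 2\sum_{i=1}^{\dim V} E(N-i,r,d)$, but it does not: unrolling $b(n) \leqslant E(N) + 2\,b'(n-1)$ produces $\sum_{k=0}^{n} 2^{k} E(N-k)$, with exponentially growing coefficients, which is strictly worse than the claimed bound and would break the arithmetic in the proof of Theorem~\ref{theorem:with-r}. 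The sharper recursion one actually needs is $B(V) \leqslant E(N,r,d) + E(N-1,r,d) + B(A\cap V)$, and the missing idea is how to get it. The point is not merely that $b_i(V) \leqslant b_i(A\cap V)$ for $i < n$, but that one should exploit the \emph{alternating} structure: from $b_n(V) = (-1)^{n}\chi(V) + \bigl(b_{n-1}(V) - b_{n-2}(V) + \cdots\bigr)$, the weak Lefschetz injectivity in degree $n-1$ together with the isomorphisms in degrees $< n-1$ gives
\[
b_{n-1}(V) - b_{n-2}(V) + \cdots \;\leqslant\; b_{n-1}(A\cap V) - b_{n-2}(A\cap V) + \cdots \;=\; (-1)^{n-1}\chi(A\cap V),
\]
so that $b_n(V) \leqslant |\chi(V)| + |\chi(A\cap V)| \leqslant E(N,r,d)+E(N-1,r,d)$. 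One then writes $B(V) = b_n(V) + \sum_{i<n} b_i(V) \leqslant b_n(V) + B(A\cap V)$ (a single $B(A\cap V)$, not $2B(A\cap H)$) and the induction closes with the correct coefficients. You flagged that ``the bookkeeping \dots is the other place where one must be careful to get the constant $2$,'' which is an accurate diagnosis, but the fix — using $\chi(A\cap V)$ rather than $2\sum_{i<n} b_i$ — is exactly the step you did not carry out.

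A secondary point: the appeal to ``handling the remaining degrees dually using $\mathrm{H}^{i}_{c}$'' is a detour not needed here, and is not justified for singular local complete intersections, where Poincaré duality for $\mathbb{Q}_{\ell}$ need not hold. The paper needs only Artin affine vanishing (to confine cohomology to degrees $[0,\dim V]$), Deligne's weak Lefschetz for perverse sheaves, and Laumon's theorem identifying $\chi$ with $\chi_c$ in order to invoke the Adolphson--Sperber bound $E(N,r,d)$ for $|\chi(V)|$; no duality is used.
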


The proof of Proposition~\ref{proposition:hyper} closely follows
Katz's proof of inequality \eqref{eq:katz}.  Instead of using the
standard weak Lefschetz theorem for smooth varieties as Katz did, we
apply the following weak Lefschetz theorem for perverse sheaves, due
to Deligne.

\begin{theorem*}[Deligne {\cite[Corollary~A.5]{k1}}]
Let \(\pi\colon X \to \mathbb{P}^{N}\) be a quasi-finite morphism to a projective space.
Let \(\mathcal{P}\) be a perverse sheaf on \(X\). Then for a sufficiently general
hyperplane \(A\), the restriction morphism
\begin{equation*}
\mathrm{H}^i(X;\mathcal{P}) \to \mathrm{H}^i(\pi^{-1}A; \mathcal{P}|_{\pi^{-1}A})
\end{equation*}
is injective if \(i = -1\), and bijective if \(i < -1\).
\end{theorem*}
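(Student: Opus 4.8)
The plan is to reduce the statement, via excision, to a single cohomological vanishing for a perverse sheaf on \(X\), and then to prove that vanishing by pushing everything down to \(\mathbb{P}^{N}\), where properness and the quasi-finiteness of \(\pi\) give control, with the genericity of the hyperplane doing the essential work at the very end. Fix a hyperplane \(A \subset \mathbb{P}^{N}\) (to be taken general), set \(U \coloneq \mathbb{P}^{N} \setminus A\) (an affine space), and pull \(A \hookrightarrow \mathbb{P}^{N} \hookleftarrow U\) back along \(\pi\) to get \(X_{A} = \pi^{-1}(A) = \pi^{-1}A\), \(X_{U} = \pi^{-1}(U)\), with \(i\colon X_{A} \hookrightarrow X\) the closed immersion and \(j\colon X_{U} \hookrightarrow X\) the open one. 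Applying \(\mathrm{R}\Gamma(X,-)\) to the excision triangle \(j_{!}j^{*}\mathcal{P} \to \mathcal{P} \to i_{*}i^{*}\mathcal{P} \xrightarrow{+1}\) and using \(\mathrm{R}\Gamma(X, i_{*}i^{*}\mathcal{P}) = \mathrm{R}\Gamma(X_{A}, \mathcal{P}|_{X_{A}})\), the restriction map of the statement fits into a long exact sequence whose third terms are the groups \(\mathrm{H}^{i}(X, j_{!}j^{*}\mathcal{P})\); hence it suffices to prove \(\mathrm{H}^{i}(X, j_{!}j^{*}\mathcal{P}) = 0\) for every \(i \leqslant -1\). (This reduction, together with the next step, also disposes of the finiteness/constructibility points flagged earlier, since it expresses everything through operations that manifestly commute with algebraically closed base change.)

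Next, observe that \(j\) is an \emph{affine} morphism, being the base change along \(\pi\) of the affine open immersion \(U \hookrightarrow \mathbb{P}^{N}\). By Artin's theorem, for an affine open immersion both \(j_{!}\) and \(\mathrm{R}j_{*}\) are t-exact for the perverse t-structure, so \(\mathcal{Q} \coloneq j_{!}j^{*}\mathcal{P}\) is again a perverse sheaf on \(X\); moreover \(i^{*}\mathcal{Q} = 0\), i.e. \(\mathcal{Q}\) is supported off the hyperplane \(A\). The problem is thus reduced to: for a perverse sheaf \(\mathcal{Q}\) on \(X\) with \(i^{*}\mathcal{Q} = 0\) and \(A\) general, show \(\mathrm{H}^{i}(X, \mathcal{Q}) = 0\) for \(i < 0\).

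The heart of the matter is this last vanishing, and here I would push \(\mathcal{Q}\) down to \(\mathbb{P}^{N}\) to recover properness. Since \(\pi\) has \(0\)-dimensional fibres, \(\mathrm{R}\pi_{*}\) is left t-exact for the perverse t-structures (the fibre-dimension estimate for pushforward, in relative dimension \(0\)), so \(\mathcal{K} \coloneq \mathrm{R}\pi_{*}\mathcal{Q}\) lies in \({}^{p}D^{\geqslant 0}(\mathbb{P}^{N})\) and \(\mathrm{H}^{i}(X, \mathcal{Q}) = \mathrm{H}^{i}(\mathbb{P}^{N}, \mathcal{K})\). Over \(U\), \(\mathcal{K}|_{U} = \mathrm{R}\pi_{U*}(\mathcal{Q}|_{X_{U}})\) is \(\mathrm{R}\pi_{U*}\) of a perverse sheaf, hence again in \({}^{p}D^{\geqslant 0}(U)\), so by Artin's affine co-vanishing on the affine space \(U\) it contributes nothing to \(\mathrm{H}^{<0}\); what remains is the part of \(\mathcal{K}\) supported over \(A\), and this is precisely where \(A\) must be general — the statement is plainly false for special \(A\) (e.g. one containing a component of \(\overline{\pi(X)}\)). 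To make genericity operative I would spread \(\mathcal{Q}\) out over the dual projective space \((\mathbb{P}^{N})^{\vee}\): pull \(\mathcal{P}\) back along the universal hyperplane so that \(\mathrm{H}^{i}(X, j_{!}j^{*}\mathcal{P})\) becomes the stalk at \([A]\) of the relative pushforward of the universal analogue of \(\mathcal{Q}\); by Deligne's generic base-change/local-acyclicity theorem, for \([A]\) in a dense open of \((\mathbb{P}^{N})^{\vee}\) that stalk computes \(\mathrm{R}\Gamma(X, j_{!}j^{*}\mathcal{P})\) honestly, and the required vanishing then comes out of left t-exactness of \(\mathrm{R}\pi_{*}\) together with Artin co-vanishing over the affine \(U\), the boundary term over \(A\) being annihilated by the generic choice. (Alternatively one can induct on \(N\) through iterated general hyperplane sections, applying the \(\mathbb{P}^{N-1}\) case to the perverse cohomology sheaves of \(\mathcal{P}|_{X_{A}}\); the bookkeeping of perverse amplitudes is the same difficulty in another guise.)

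I expect this third step — turning "\(A\) sufficiently general'' into actual vanishing of the negative-degree cohomology coming from over \(A\) — to be the main obstacle. The first two steps are formal, and the t-exactness inputs (left t-exactness of \(\mathrm{R}\pi_{*}\) for quasi-finite \(\pi\), perverse t-exactness of \(j_{!}, \mathrm{R}j_{*}\) for affine open \(j\), Artin affine (co)vanishing) are standard; the genuine content, and the place where local acyclicity is indispensable, is the genericity argument.
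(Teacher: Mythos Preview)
The paper does not contain a proof of this theorem: it is quoted from the literature (Deligne's argument, recorded as Corollary~A.5 in the appendix to \cite{k1}) and then \emph{used} as a black box in the proof of Proposition~\ref{proposition:hyper}. There is thus no proof in the paper against which to compare your proposal.

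On its own merits, your sketch is sound through the first two reductions: the excision triangle does reduce the statement to \(\mathrm{H}^{i}(X, j_{!}j^{*}\mathcal{P}) = 0\) for \(i \leqslant -1\), and affineness of \(j\) (pulled back from \(U \hookrightarrow \mathbb{P}^{N}\)) does make \(\mathcal{Q}=j_{!}j^{*}\mathcal{P}\) perverse. Your third step is also correctly organised: \(R\pi_{*}\) is left t-exact for quasi-finite \(\pi\) (factor as open immersion followed by finite via Zariski's Main Theorem), so \(\mathcal{K}=R\pi_{*}\mathcal{Q}\in{}^{p}D^{\geqslant 0}(\mathbb{P}^{N})\); properness of \(\mathbb{P}^{N}\) identifies \(\mathrm{H}^{i}(\mathbb{P}^{N},j'_{!}(\mathcal{K}|_{U}))\) with \(\mathrm{H}^{i}_{c}(U,\mathcal{K}|_{U})\), and Artin co-vanishing on the affine \(U\) kills this for \(i<0\). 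What genuinely remains is controlling \((R\pi_{*}\mathcal{Q})|_{A}\), which need not vanish since \(\pi\) is not proper, and you rightly flag this as the crux. Your proposed fix---spreading out over \((\mathbb{P}^{N})^{\vee}\) and invoking generic base change / local acyclicity---is indeed the standard route and essentially how Deligne's proof proceeds, but as written you have stated a plan rather than carried it out: one must still verify that the universal object over \((\mathbb{P}^{N})^{\vee}\) sits in the correct perverse amplitude so that its generic stalks vanish in the required range, and that is where the actual work lies. In short, your outline is correct and honest about its own gap; the paper itself simply cites the result.
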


We now use Deligne's theorem and Katz's original Euler characteristic
argument \cite[p.~33]{k2} to prove
Proposition~\ref{proposition:hyper}.

\begin{proof}[Proof of Proposition~\ref{proposition:hyper}]
We proceed by induction on the dimension of \(V\).  In the base case
where \(\dim V = 0\), the result is trivial.  Now assume
\(\dim V > 0\).  Since \(V\) is a set-theoretic local complete
intersection, the shifted constant sheaf
\(\mathbb{Q}_{\ell,V}[\dim V]\) is a perverse sheaf on \(V\), see
e.g., \cite[Lemma~III.6.5]{kw}.  Now apply Deligne's theorem by
\begin{itemize}
\item taking \(X = V\),
\item letting \(\pi\) be the composition of the inclusion map
\(V \hookrightarrow \mathbb{A}^{N}\) and the standard embedding
\(\mathbb{A}^{N} \hookrightarrow \mathbb{P}^{N}\), and
\item setting \(\mathcal{P} = \mathbb{Q}_{\ell,V}[\dim V]\).
\end{itemize}
Deligne's theorem then implies that for a general affine hyperplane
\(A \subset \mathbb{A}^{N}\), the restriction map
\begin{equation*}
\mathrm{H}^{i}(V;\mathbb{Q}_{\ell}) \to \mathrm{H}^{i}(A \cap V; \mathbb{Q}_{\ell})
\end{equation*}
is injective when \(i = \dim V-1\), and bijective if \(i < \dim V-1\).
Ergo,
\begin{align}
  &\;\dim \mathrm{H}^{\dim V}(V;\mathbb{Q}_{\ell}) \nonumber \\
  =&\; (-1)^{\dim V}\chi(V;\mathbb{Q}_{\ell}) + \dim \mathrm{H}^{\dim V-1}(V;\mathbb{Q}_{\ell}) - \dim \mathrm{H}^{\dim V-2}(V;\mathbb{Q}_{\ell}) + \cdots \nonumber \\
  \leqslant&\; (-1)^{\dim V}\chi(V;\mathbb{Q}_{\ell})  + \dim \mathrm{H}^{\dim V-1}(A \cap V; \mathbb{Q}_{\ell}) - \dim \mathrm{H}^{\dim V-2}(A\cap V;\mathbb{Q}_{\ell}) + \cdots \nonumber \\
  =&\; (-1)^{\dim V}\chi(V;\mathbb{Q}_{\ell}) + (-1)^{\dim V-1}\chi(A \cap V;\mathbb{Q}_{\ell}) \nonumber\\
  \leqslant&\; E(N,r,d) + E(N-1,r,d). \label{eq:middle}
\end{align}
Here we have used the Artin vanishing theorem, which asserts that for a
finite type affine scheme \(V\) over \(k\),
\(\mathrm{H}^{i}(V;\mathbb{Q}_{\ell})=0\) unless
\(0\leqslant i \leqslant \dim V\).

Note that \(A \cap V\) is a closed subscheme of the
\((N-1)\)-dimensional affine space \(A\), defined by polynomials of
degree \(\leqslant d\).  Since \(A\) is generic and \(V\) is a
set-theoretic local complete intersection, it follows that
\(A \cap V\) also remains a set-theoretic local complete intersection.
By inductive hypothesis, we have
\begin{equation}
\label{eq:others}
B(A \cap V) \leqslant E(N-1,r,d) + 2 \sum_{i=1}^{\dim V-1}E(N-1-i,r,d).
\end{equation}
Therefore,
\begin{align*}
  B(V)
  &\leqslant \dim \mathrm{H}^{\dim V}(V;\mathbb{Q}_{\ell}) + B(A \cap V)
  & \text{(Deligne's theorem)}\\
  &\leqslant E(N,r,d) + E(N-1,r,d)  \\
  &\quad + E(N-1,r,d) + 2 \sum_{1\leq i\leq \dim V-1} E(N-1-i,r,d)
  & \text{(By \eqref{eq:middle} and \eqref{eq:others})}\\
  &= E(N,r,d) + 2\sum_{i=1}^{\dim V} E(N-i,r,d).
\end{align*}
This completes the proof.
\end{proof}

\begin{proof}[Proof of Theorem~\ref{theorem:complete-intersection}]
We take \(E(N,r,d)\) as in \eqref{eq:as}.  When \(N = \dim V\), we
have \(V = \mathbb{A}^{N}\), and the result is trivial.  When
\(N = 1\), and \(\dim V = 0\), the result is equally trivial since we
have \(B(V) \leq d\).  Assume now \(N \geq 2\), and \(\dim V < N\).
Then by Proposition~\ref{proposition:hyper}, we have
\begin{align}
  B(V)
  &\leqslant 2^{r}\left[ (rd+r+1)^{N} + 2\sum_{i=1}^{\dim V}(rd+r+1)^{N-i} \right]
    \nonumber \\
  &\leqslant 2^{r}(rd+r+2)^{N} \label{eq:complete-intersection}
\end{align}
thanks to the binomial theorem.  This completes the proof.
\end{proof}

\subsubsection*{Upper bounds in general}

Now suppose we have inductively constructed \(B(N,1,d)\),
\(B(N,2,d)\), \(\ldots\), up to \(B(N,r-1,d)\), and these numbers are
all finite.  We proceed to deal with varieties defined by \(r\)
equations.

Suppose \(f_{1},\ldots,f_{r} \in k[x_{1},\ldots,x_{N}]\),
\(\deg f_{i}\leqslant d\).  Define:
\begin{itemize}
\item \(F_{i} \coloneq \{f_{i} = 0\}\),
\item \(W \coloneq \bigcup F_{i}\),
\item for each \(J \subset \{1,\ldots,r\}\),
\(F_{J} \coloneq \bigcap_{j\in J}F_{j}\),
\item \(V \coloneq \{f_{1}=\cdots=f_{r}=0\} = F_{\{1,\ldots,r\}}\).
\end{itemize}
We will bound \(B(V)\).

\begin{proposition}\label{proposition:general}
In the situation above, we have
\begin{equation*}
B(V) \leqslant B(N,1,rd) + \sum_{i=1}^{r-1}\binom{r}{i} B(N,i,d).
\end{equation*}
In particular, we can take \(B(N,r,d)\) to be
\(B(N,1,rd)+\sum_{i=1}^{r-1}\binom{r}{i}B(N,i,d)\).
\end{proposition}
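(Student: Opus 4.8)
The strategy is to realise $V=F_{\{1,\dots,r\}}$ as the deepest stratum of the closed cover $W=F_1\cup\dots\cup F_r$ and to run the associated Mayer--Vietoris machinery, exploiting that $W$ is itself a hypersurface. First I would observe that $W$ is the zero locus of the single polynomial $f_1\cdots f_r$, of degree $\leqslant rd$, so that $B(W)\leqslant B(N,1,rd)$; this quantity is finite by Proposition~\ref{proposition:hyper}, since a hypersurface is a set-theoretic local complete intersection. Next I would invoke the \v{C}ech resolution of the constant sheaf attached to this closed cover,
\[
0\to\mathbb{Q}_{\ell,W}\to\bigoplus_{|J|=1}(\iota_J)_{*}\mathbb{Q}_{\ell,F_J}\to\bigoplus_{|J|=2}(\iota_J)_{*}\mathbb{Q}_{\ell,F_J}\to\cdots\to(\iota_V)_{*}\mathbb{Q}_{\ell,V}\to 0,
\]
where $\iota_J\colon F_J\hookrightarrow W$ and the term $\bigoplus_{|J|=p}$ is placed in cohomological degree $p-1$; its exactness is checked on stalks, where at a point lying on exactly the $F_i$ with $i$ in a nonempty set $S$ one recovers the augmented simplicial chain complex of the full simplex on $S$, which is acyclic. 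Setting $\mathcal{K}^{p}=\bigoplus_{|J|=p+1}(\iota_J)_{*}\mathbb{Q}_{\ell,F_J}$ for $0\leqslant p\leqslant r-1$, this identifies $\mathbb{Q}_{\ell,W}$ with the complex $\mathcal{K}^{\bullet}$ and yields the Mayer--Vietoris spectral sequence
\[
E_1^{p,q}=\bigoplus_{|J|=p+1}\mathrm{H}^{q}(F_J;\mathbb{Q}_{\ell})\ \Longrightarrow\ \mathrm{H}^{p+q}(W;\mathbb{Q}_{\ell}),\qquad 0\leqslant p\leqslant r-1.
\]

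The key point is that $V$ occupies the last column: $E_1^{r-1,q}=\mathrm{H}^{q}(V;\mathbb{Q}_{\ell})$. Since there is no column $p=r$, no differential \emph{leaves} this column, and so for every $s\geqslant1$ the space $E_{s+1}^{r-1,q}$ is the quotient of $E_{s}^{r-1,q}$ by the image of the incoming differential $d_s$ arriving from $E_{s}^{\,r-1-s,\,q+s-1}$. Telescoping over $s=1,\dots,r-1$ and using that each page is a subquotient of the previous one,
\[
\dim\mathrm{H}^{q}(V;\mathbb{Q}_{\ell})=\dim E_1^{r-1,q}\ \leqslant\ \dim E_\infty^{r-1,q}+\sum_{s=1}^{r-1}\dim E_1^{\,r-1-s,\,q+s-1}.
\]
Now $E_\infty^{r-1,q}$ is a subquotient of $\mathrm{H}^{r-1+q}(W;\mathbb{Q}_{\ell})$, and $E_1^{\,r-1-s,\,q+s-1}=\bigoplus_{|J|=r-s}\mathrm{H}^{q+s-1}(F_J;\mathbb{Q}_{\ell})$, so summing over $q$ gives
\[
B(V)\ \leqslant\ B(W)+\sum_{s=1}^{r-1}\sum_{|J|=r-s}B(F_J)\ \leqslant\ B(N,1,rd)+\sum_{s=1}^{r-1}\binom{r}{r-s}B(N,\,r-s,\,d),
\]
because $F_J$ is cut out by $|J|=r-s$ polynomials of degree $\leqslant d$ and there are $\binom{r}{r-s}$ index sets of that size. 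Re-indexing by $i=r-s$ gives exactly the stated inequality; the right-hand side is finite by the inductive hypothesis on $B(N,i,d)$ for $i\leqslant r-1$ together with the finiteness of $B(N,1,rd)$, which establishes the ``in particular'' clause.

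I do not anticipate a genuine obstacle: the single idea is the observation that $V$ sits at the edge of the $E_1$-page, so that only \emph{incoming} Mayer--Vietoris differentials can affect $\mathrm{H}^{\ast}(V)$ --- these being contributed precisely by the lower intersections $F_J$ with $|J|<r$ and, at the far end, by $\mathrm{H}^{\ast}(W)$ --- and everything else is bookkeeping with binomial coefficients. If one wishes to avoid spectral sequences altogether, the identical estimate follows by breaking the \v{C}ech resolution into short exact sequences $0\to Z^{p}\to\mathcal{K}^{p}\to Z^{p+1}\to 0$ with $Z^{0}=\mathbb{Q}_{\ell,W}$ and $Z^{r-1}=(\iota_V)_{*}\mathbb{Q}_{\ell,V}$, and then iterating the associated long exact cohomology sequences from $Z^{r-1}$ down to $Z^{0}$; this is the form in which I would write out the proof.
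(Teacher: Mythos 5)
Your proposal is correct and follows essentially the same route as the paper: both exploit the Mayer--Vietoris spectral sequence of the closed cover $W = F_1 \cup \dots \cup F_r$, note that $V$ sits in the rightmost column $p = r-1$ so that only incoming differentials can shrink $\mathrm{H}^q(V) = E_1^{r-1,q}$, bound those by $\dim E_1^{r-1-s,q+s-1}$ (since each page is a subquotient of the $E_1$-page), and bound $B(W)$ by $B(N,1,rd)$ using that $W$ is the degree-$\leqslant rd$ hypersurface $\{f_1\cdots f_r = 0\}$. The extra detail you supply (the explicit \v{C}ech resolution and its stalkwise acyclicity, plus the short-exact-sequence reformulation) is correct filling-in rather than a different argument.
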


\begin{proof}
There is a Mayer--Vietoris spectral sequence for the finite closed
covering \(\bigcup_{i=1}^{r} F_{i}\) of \(W\):
\begin{equation*}
E_{1}^{p,q} =\bigoplus_{\operatorname{Card}J=p+1} \mathrm{H}^{q}(F_{J};\mathbb{Q}_{\ell})
\Rightarrow \mathrm{H}^{p+q}(W;\mathbb{Q}_{\ell}).
\end{equation*}
We have \(E_{1}^{r-1,q}=\mathrm{H}^{q}(V;\mathbb{Q}_{\ell})\).  Because
\(E_{\infty}^{r-1,q}\) is a subquotient of
\(\mathrm{H}^{q+r-1}(W;\mathbb{Q}_{\ell})\), we have
\begin{equation*}
\sum_{q}\dim E_{\infty}^{r-1,q} \leqslant B(W).
\end{equation*}
For each \(q\) and each \(i\), \(E_{i}^{r-1,q}\) appears at the
rightmost column of the \(E_{i}\)-page of the spectral sequence, i.e.,
\(E_{i}^{p,q} = 0\) for \(p \geqslant r\).  Therefore, we have
\(E_{i+1}^{r-1,q} = E_{i}^{r-1,q}/d_{i}(E_{i}^{r-1-i,q+i-1})\), where
\(d_{i}\colon E^{p,q}_{i} \to E^{p+i,q-i+1}_{i}\) is the differential
of the spectral sequence.  It follows that
\begin{align}
  B(W) &= \sum_{p,q} \dim E_{\infty}^{p,q}
         \geqslant \sum_{q} \dim E_{\infty}^{r-1,q} \nonumber \\
       & = \sum_{q} \dim E_{1}^{r-1,q} - \sum_{q}\sum_{i=1}^{r-1}\dim d_{i}(E_{i}^{r-1-i,q+i-1}) \nonumber \\
       &\geqslant B(V) - \sum_{i=1}^{r-1} \sum_{q} \dim E_{1}^{r-1-i,q+i-1} \label{eq:1}
\end{align}
The last inequality holds because for any \(i\geqslant 1\),
\(E_{i}^{p,q}\) is a subquotient of \(E_{1}^{p,q}\).

For each \(p \geqslant 0\), \(E_{1}^{p,q}\) is a direct sum of
\(\mathrm{H}^{q}(F_{J})\) with \(\operatorname{Card}J=p+1\).  There
are \(\binom{r}{p+1}\) such summands.  Since
\(F_{J}\subset \mathbb{A}^{N}\) is cut out by \(p+1\) polynomials of
degree \(\leqslant d\), we have
\begin{align*}
  \sum_{q}\dim E_{1}^{p,q} &= \sum_{q} \bigoplus_{\operatorname{Card}J=p+1} \mathrm{H}^{p}(F_{J};\mathbb{Q}_{\ell}) \\
  &\leqslant \binom{r}{p+1} B(N,p+1,d).
\end{align*}
Hence,
\begin{equation}\label{eq:2}
\sum_{i=1}^{r-1}\sum_{q} \dim E_{1}^{r-1-i,q+i-1} \leqslant \sum_{i=1}^{r-1}\binom{r}{r-i} B(N,r-i,d).
\end{equation}
Since \(W\) is a hypersurface cut out by \(f_{1}\cdots f_{r}\), a
polynomial of degree \(\leqslant rd\), we conclude from
Proposition~\ref{proposition:hyper} that \(B(W) \leqslant B(N,1,rd)\).
Thus Equations \eqref{eq:1} and \eqref{eq:2} imply that
\begin{equation*}
B(V) \leqslant B(N,1,rd) + \sum_{i=1}^{r-1} \binom{r}{i}B(N,i,d).
\end{equation*}
This completes the proof.
\end{proof}

\begin{proof}[Proof of Theorem~\ref{theorem:with-r}]
In view of Proposition~\ref{proposition:hyper} and
Proposition~\ref{proposition:general}, if the numbers \(B(N,r,d)\) are
defined inductively as
\begin{itemize}
\item \(B(N,1,d)=E(N,d) + 2\sum_{i=1}^{N-1}E(i,d)\),
\item \(B(N,r,d) = B(N,1,rd) + \sum_{i=1}^{r-1}\binom{r}{i}B(N,i,d)\),
\end{itemize}
where \(E(N,d)\) is given by \eqref{eq:as}, then
\(B(V) \leqslant B(N,r,d)\).  Let us prove
\(B(N,r,d) \leqslant 2r^{r}(rd+3)^{N}\) by induction.  For the base
case, we invoke Equation~\eqref{eq:complete-intersection} with
\(r=1\).  When \(r > 1\), we have
\begin{align*}
  B(N,r,d)
  &\leqslant 2(rd+3)^{N} + \sum_{i=1}^{r-1} \binom{r}{i} \times 2i^{i}(id+3)^{N}
  & \text{(inductive hypothesis)}\\
  &\leqslant 2(rd+3)^{N} \left[ 1+\sum_{i=1}^{r-1} \binom{r}{i} (r-1)^{i} \right]
  & (\text{since } i \leqslant r-1 < r)\\
  &\leqslant 2 \times (rd+3)^{N} \times r^{r}
  & \text{(binomial theorem).}
\end{align*}
This completes the proof of Theorem~\ref{theorem:with-r}.
\end{proof}

\noindent
\textit{Acknowledgment.}  To be added.

\small

\medskip\noindent
\textsc{C629 Shuangqing Complex Building A, Tsinghua University, Beijing, China.}

\noindent
\textit{Email}: \texttt{zhangdingxin13@gmail.com}
\end{document}